\documentclass[a4papper]{article}

\usepackage[cp1251]{inputenc}
\usepackage[T2A]{fontenc}
\usepackage[russian,english]{babel}
\usepackage{amsmath, amssymb, amsthm, amsfonts}
\usepackage{titlesec}
\usepackage{graphicx}
\usepackage{indentfirst}
\usepackage{cmap}

\renewcommand\qedsymbol{$\blacksquare$} 

\titlelabel{\thetitle.\,}

\newtheorem {theorem}{Theorem}[section]

\newtheorem{corollary}{Corollary}[section]

\newtheorem{lemma}{Lemma}[section]

\theoremstyle{definition}
\newtheorem{example}{Example}[section]

\begin{document}

\title{On a Transmission Problem Related to Models of Electrocardiology}
\date{}
\author{Yulia L. Shefer\\ Institute of Mathematics, Siberian Federal University}

\maketitle

\abstract{
We a generalization of transmission problems for elliptic 
matrix operators related to the mathematical 
models of cardiology. We indicate sufficient conditions providing that 
the  approach elaborated for scalar elliptic operators is still valid 
in this much more general situation. 
}

\section*{Introduction}

In this paper we consider a family of transmission problems for elliptic operators with 
constant coefficients related to models of electrocardiology. More precisely, for many years 
for satisfactory models of heart activity one uses Cauchy, Dirichlet, and Neumann problems for 
scalar strongly elliptic operators, see, for example, \cite{1}, \cite{2}. A modification of 
such a model involving boundary problems for the Laplace operator has been recently studied 
in  \cite{3}. 

We consider similar problems for more general matrix linear elliptic operators and find 
sufficient conditions under which the scheme for solving the problems suggested in \cite{3} 
allows to construct their solutions. Our approach is essentially based on the general theory 
of Fredholm problems for strongly elliptic (matrix) linear operators, see, e.g., 
\cite{Simanca1987}, and the theory of regularization of an ill-posed Cauchy problem for 
operators with an injective principal symbol, see \cite{3}.

\section{A model example}
\label{sec:1}

To begin with, we consider a basic example related to models of electrocardiology. As known 
from clinical practice, see, e.g., \cite{1}, \cite{2}, electrical activity of cardiac cells is 
crucial for pumping function of heart, which is the result of rhythmical cycles of 
contraction-relaxation of the cardiac tissue. Anomalies of electrical activity often cause 
heart diseases, which makes these investigations, in particular, development of adequate 
mathematical models, very relevant nowadays.

Let us illustrate this by one model of electrocardiology  \cite{1,2,2K}. 
Denote by $\Omega_B$ and $\Omega_H$ three-dimensional domains with piecewise smooth boundaries 
with  $\partial \Omega_B$ and  $\partial \Omega_H$ corresponding to a body and a heart (see 
Fig. 1). Then the domain $\Omega = \Omega_B \setminus \Omega_H$ with the boundary $\partial 
\Omega = \partial \Omega_B  \cup \partial \Omega_H$ corresponds to the body without heart.

\begin{figure}[h!]
	\centering
		\includegraphics[scale=0.5]{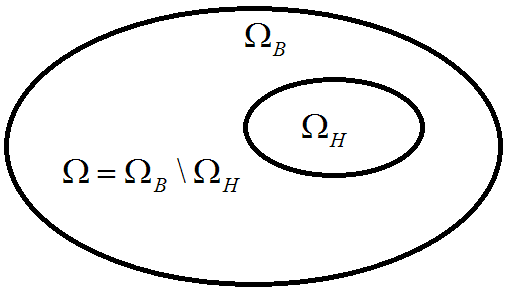}
	\label{fig:domain}
	\caption{Geometry of the model}
\end{figure}

Usually, in standard models one assumes that the cardiac tissue can be divided into two parts 
-- intracellular and extracellular parts separated by a membrane -- to which the electric 
potential $u_i$ and $u_e$, respectively, is assigned. Regarding the cardiac tissue as a 
continuous medium we think of the potentials as defined in each point of $\Omega_H$ and 
satisfying the equation
\begin{equation}
\nabla^*M_i \nabla u_i + \nabla^*M_e \nabla u_e = 0, 
\label{eq:1}
\end{equation}
where $M_i$ and $M_e$ are known tensor matrices that characterize intracellular and 
extracellular parts, and $\nabla$ is the gradient operator in $\mathbb{R}^3$.

One often considers the case when $M_i$ and $M_e$~are positively defined matrices with 
constant coefficients with entry values defined by conductivity of the cardiac tissue. For 
simplicity of the further analysis one assumes that these matrices are proportional
\begin{equation*}
M_i = \lambda M_e, \quad\ \lambda > 0.
\end{equation*}

Based on equation (\ref{eq:1}) one considers two models of heart activity. In one model it is 
assumed that the heart is isolated and one considers the problem
\begin{equation}
\begin{gathered}
	\nabla^*M_i \nabla u_i + \nabla^*M_e \nabla u_e = 0 \text{ в $\Omega_H$}, \\
	(\nu_1, \nu_2, \nu_3)M_i \nabla u_i = 0 \text{ на $\partial \Omega_H$}, \\
	(\nu_1, \nu_2, \nu_3)M_e \nabla u_e = -(\nu_1, \nu_2, \nu_3)M_b \nabla u_b \text{ на $
	\partial \Omega_H$}, \\
	u_b = u_e \text{ на $\partial \Omega_H$},
\end{gathered}
\label{eq:2}
\end{equation} 
where $M_b$~is the tensor matrix characterizing conductivity of the body, 
$\nu$ is the vector field of unit outward normal vectors to the boundary
of the domain under the consideration and  $u_{b}$ is the electric potential of the body.

In the second model one takes the body into account, and from the electrodynamics of 
stationary currents it follows that the electric potential of the body $u_b$ in the domain $
\Omega$ is defined by the equations

\begin{equation}
\begin{gathered} 
	\nabla^* M_b \nabla u_b = 0 \text{ in $\Omega$}, \\
	(\nu_1,  \nu_2, \nu_3)M_b \nabla u_b = 0 \text{ on $\partial \Omega_B$}. 
\end{gathered} 
\label{eq:3}
\end{equation}

A feature of the model is the fact that one is more interested not in potentials  $u_i$ and 
$u_e$ separately but in their difference $v=u_i-u_e$ in $\Omega_H$ or at least on its boundary. 

Since matrices $M_i$ and $M_e$ are positively defined and not degenerate, the problems 
\eqref{eq:2}, \eqref{eq:3} can be studied in the framework of the theory of boundary (maybe 
ill-posed) problems for elliptic formally self-adjoint equations, see \cite{1,2,2K}. Moreover, 
notice that the problems above may be regarded as transmission problems for elliptic equations 
with discontinuous coefficients describing solutions in different domains of a continuum with 
the help of additional conditions on separating surfaces, see, for example, \cite{Sche60}, 
\cite{Bor10}. 

Until now we have not used any functional spaces in the problems description, in the next 
section we give a precise formulation of a more general problem and specify functional classes 
for its solution.

\section{Formulation of a problem}
\label{sec:2}

Let $\theta$ be a measurable set in ${\mathbb R}^n$, $n\geq 2$.
Denote by $L^2(\theta)$ a Lebesgue space of complex-valued functions on  $\theta$ with the 
scalar product
\begin{equation*}
\left( u,v \right)_{L^2(\theta)} = \int_{\theta} \overline{v} (x)u (x)\ dx .
\end{equation*}
If $D$ is a domain in ${\mathbb R}^n$ with a piecewise smooth boundary
$\partial D$, 
then for $s \in \mathbb{N}$ we denote by $H^s(D)$ 
the standard Sobolev space with the scalar product
\begin{equation*}
\left( u,v \right)_{H^s(D)} = \int_{D}\sum_{|\alpha| \le s} 
\overline {(\partial^{\alpha}v)} (\partial^{\alpha}u)
dx .
\end{equation*}
It is well-known that this scale extends for all $s>0$. Let now $H^s(D)$ for 
$s\in {\mathbb R}_+\setminus {\mathbb Z}_+$ be the standard Sobolev-Slobodeckij spaces. 
Denote by $H^s_0 (D)$ the closure of the subspace $C^{\infty}_{\mathrm{comp}} (D)$ in 
$H^{s} (D)$, where $C^{\infty}_{\mathrm{comp}} (D)$ is the linear space of functions with 
compact supports in  $D$.

The space of $k$-vectors $u =(u_1, ...,  u_k)$ whose components lie in  
$H^s(D)$ equipped with the scalar product
\begin{equation*}
\left( u,v \right)_{[H^s(D)]^k} = \sum_{j=1}^k \int_{D}\sum_{|\alpha| \le s} 
\overline{(\partial^{\alpha}v_j)} (\partial^{\alpha}u_j)dx  = 
\int_{D}\sum_{|\alpha| \le s} 
(\partial^{\alpha}v)^* (\partial^{\alpha}u)dx 
\end{equation*}
we shall denote by $[H^s(D)]^k$.

Further on, we shall consider linear matrix operators  
\begin{equation*}
A= \sum_{|\alpha|\leq p} A_\alpha \partial ^\alpha, \, x \in D,
\end{equation*}
where $p\in \mathbb N$ is the order of operator $A$, $\alpha \in {\mathbb Z}^n_{+}$, and 
$A_\alpha$ are $(l\times k)$-matrices with constant coefficients.
By a formal adjoint of $A$ we call the differential operator 
\begin{equation*}
A^*= \sum_{|\alpha|\leq p} A^*_\alpha \partial ^\alpha,
\end{equation*}

\noindent
where $A^*_\alpha$ is the adjoint matrix for $A_\alpha$ or, equivalently, 
\begin{equation*}
	\left(  A u, v \right)_{[L^2(D)]^l} =  
	\left(  u, A^{*}  v \right)_{[L^2(D)]^k} 
	\,\ \text{для всех } u \in {\left[C_{0}^{\infty}(D)\right]}^k, 
	v \in {\left[C_{0}^{\infty}(D)\right]}^l.
\end{equation*}
As usual, the principal symbol of an operator  $A$ is the matrix
\begin{equation*}
\sigma(A)(x,\zeta)= \sum_{|\alpha|= p} A_\alpha  \zeta ^\alpha, \, x \in D, \, 
\zeta \in {\mathbb C}^n.
\end{equation*}
We say that the principal symbol of $A$ is injective if $l \geq k$ and
$$
	\text{rang} \,\sigma(A)(x,\zeta) = k, \,\ 
	\text{для всех } \zeta \in \mathbb{R}^{n} \setminus\left\{0\right\} \,\ 
	\text{ и всех } x \in \overline{D}.
$$
If $l=k$ operators with injective principal symbols are called elliptic.

Let now $A_e$, $A_i$, and $A_b$ be linear differential operators of the first order with
 constant coefficients on  $\overline{D}_m$, i.e.
\begin{equation*}
	A_m = \sum_{j=1}^{n} a_{j}^{(m)}\frac{\partial}{\partial x_j} + a_{0}^{(m)}, \\	
\end{equation*}
where $m \in \{e, i, b \}$,  $D_e \equiv D_i \equiv
 \Omega_H$, $D_b \equiv \Omega$. 

Further on, we assume that principal symbols of operators  $A_m$ are injective in the 
corresponding domains.

Denote by $A_{m}^{*}$ a formal adjoint of $A_{m}$ and consider a generalized Laplacian 
$A_{m}^{*} A_{m}$. 

Under assumptions made above, the operator $A_{m}^{*} A_{m}$ is a strongly elliptic 
$(k\times k)$-matrix second order operator, i.e. it is elliptic and there exists a positive 
constant  $c$ such that
$$
\Re\Big(-w^*\sigma(A_{m}^{*}A_{m})(x,\zeta)w \Big) 
\geq c\left |w\right|^2 |\zeta |^{2}\, \text{for all } \zeta \in \mathbb{R}^{n} 
\setminus\left\{0\right\}, \, w \in \mathbb{C}^{k} \setminus\left\{0\right\} \,,
 x \in \overline{D}_m. 
$$
The operator $A_{m}^{*} A_{m}$ is also formally self-adjoint, i.e.
\begin{equation*}
	\left( A_{m}^{*} A_{m} u, v \right)_{[L^2(D_m)]^k} =  
	\left(  u, A_{m}^{*} A_{m} v \right)_{[L^2(D_m)]^k} =
	\left(   A_{m} u, A_{m} v \right)_{[L^2(D_m)]^l} 	
	\,\ \text{for all } u,v \in {\left[C_{0}^{\infty}(D_m)\right]}^k;	
\end{equation*}
in particular, the operator  $A_{m}^{*} A_{m}$ is (formally) positively defined
\begin{equation*}
	\left( A_{m}^{*} A_{m} u, u \right)_{[L^2(D_m)]^k} \geq 0 \,\ \text{for all } u \in 
	{\left[C_{0}^{\infty}(D_m)\right]}^k.
\end{equation*}

Let, as before, $\nu$ be the outward normal vector operator on the boundary 
of the domain of the operator $A_{m}$.
Introduce the conormal derivatives
$$
	\nu_{A_{m}} = \sigma^{*}(A_m)(\nu)A_{m},
$$
associated with these operators via Green's formula:
\begin{equation} \label{eq.Green}
\int_{\partial\Omega} v \nu_{A_{m}} u ds = \int_{\Omega} (v^* (A_{m}^*A_{m} u) - (A_{m}v)^* 
A_{m}u)dx \text{   for all } u,v \in {\left[H^{2}(\overline{D_m})\right]}^k.	
\end{equation}

Assume that bounded domains $\Omega_H$, $\Omega$, and $\Omega_b$ have twice smooth 
boundaries and  consider the following problem \eqref{eq:4}-\eqref{eq:5}: 
find vector-functions $u_i$, $u_e$ from $ {\left[H^2(\Omega_H)\right]}^k$ and a vector-function 
 $u_b$ from ${\left[H^2(\Omega)\right]}^k$ such that 

\begin{equation}
\left\{
\begin{gathered}
	A_{i}^{*}A_i u_i + A_{e}^{*}A_e u_e = 0 \text{ in $\Omega_H$}, \\
	\nu_{A_i} u_i = 0 \text{ on $\partial \Omega_H$},  \\
	\nu_{A_e} u_e = - \nu_{A_b} u_b \text{ на $\partial \Omega_H$}, \\
	u_e = u_b \text{ на $\partial \Omega_H$}, 
\end{gathered}
\right.
\label{eq:4}
\end{equation}
\vspace{3mm}
\begin{equation}
\left\{
\begin{gathered}
	A_{b}^*A_b u_b = 0 \text{ in $\Omega$}, \\
	\nu_{A_b} u_b = 0 \text{ on $\partial \Omega_B$}, 
\end{gathered}
\right.
\label{eq:5}
\end{equation}
where the equality on the boundary is in the sense of traces, and the equality in the domains 
is in the sense of distributions. In this case we can assume that traces of functions and 
their conormal derivatives are well-defined.

It is obvious that the problem (\ref{eq:4}-\ref{eq:5}) is a generalization of the problem 
(\ref{eq:2}-\ref{eq:3}). Note also that it incorporates several classical boundary problems.

\begin{example} \label{ex.2.1} 
Consider first the classical case $A_{b}=\nabla$ ($k=1$, $l=n$),
then $\nu_{A_{b}} = \frac{\partial}{\partial\nu}$ is a directional derivative along the 
outward normal vector to $\partial \Omega_B$.
If we assume that $u_e$ is known on $\partial\Omega_H$ and equal to a function
$v_0 \in H^{3/2}(\partial \Omega_H)$, 
then (\ref{eq:4}-\ref{eq:5}) gives the following problem: 
find a function $u_b \in H^2(\Omega)$ satisfying
\begin{equation}
\left\{
\begin{gathered}
	-\Delta u_b = 0 \text{ in $\Omega$}, \\
	\frac{\partial u_b}{\partial \nu}  = 0 \text{ on $\partial \Omega_B$}, \\
	u_b = v_0 \text{ on $\partial \Omega_H$}.
\end{gathered}
\right.
\label{eq:5q}
\end{equation}
This is a classical mixed problem that is often called a Zaremba problem, see, e.g. 
\cite{Zare10}, \cite{Simanca1987}. This problem can be studied by standard methods in Sobolev 
and H\"older spaces. It is well-known that this problem has a unique solution in these classes 
that can be written with the help of the Green function ${\mathcal Z}_{\Omega}(x,y)$ having 
the standard properties
$$
	u_b (x)= \int\limits_{\partial\Omega}{\mathcal Z}
	_{\Omega}(x,y)v_0 (y)dS(y),\,\, x \in \Omega_H,
$$
where $dS(y)$ is the volume form on the surface $\partial\Omega$, 
see \cite{Zare10}, \cite{Simanca1987}. 

Analogously, if we assume that $A_{e}=\nabla$ 
($k=1$, $l=n$), then 
$\nu_{A_{e}} = \frac{\partial}{\partial\nu}$ is a directional derivative along the outward 
normal vector to $\partial \Omega_H$. If the conormal derivative $\nu_{A_e} u_e$ is known on 
$\partial\Omega_H$ and equal to a function $v_1 \in H^{1/2}(\partial \Omega_H)$, 
then \eqref{eq:4}-\eqref{eq:5} gives a special case of a classical Neumann problem for a 
Laplace operator: find a function $u_b \in H^2(\Omega)$ satisfying
\begin{equation}
\left\{
\begin{gathered}
	-\Delta u_b = 0 \text{ в $\Omega$}, \\
	\frac{\partial u_b}{\partial \nu}  = 0 \text{ on $\partial \Omega_B$}, \\
	\frac{\partial u_b}{\partial \nu} = v_1 \text{ on $\partial \Omega_H$},
\end{gathered}
\right.
\label{eq:5qq}
\end{equation}
see  \cite{Simanca1987}, \cite{Mikh}. It is known that this problem is Fredholm in Sobolev and 
H\"older spaces, its solution is defined up to an additive constant, and the necessary and 
sufficient condition for solvability is the following
\begin{equation} \label{eq.N.cond.solv}
 \int\limits_{\partial\Omega_H} v_1 (y)dS(y) =0.
\end{equation}
If this condition is satisfied the problem has a unique solution  $u_b$  in these classes that 
satisfies, for example,
\begin{equation} \label{eq.N.cond.uniq}
 \int\limits_{\partial\Omega_H} u_b (y)dS(y) =0.
\end{equation}
It can be written with the help of an appropriate parametrix ${\mathcal N}_{\Omega}(x,y)$ that 
has the standard properties
$$
	u_b (x)= \int\limits_{\partial\Omega}{\mathcal N}_{\Omega}(x,y)v_0 (y)dS(y),\,\, x \in 
	\Omega_H,
$$
However, the general theory of boundary problems suggests that knowledge of
$u_e$ or $\nu_{A_e} u_e$ on $\partial\Omega_H$ does not allow to recover the potential 
$u_i$ uniquely from the remaining data and equations without additional conditions (see also 
Uniqueness Theorem \ref{t.uniq} for the problem \eqref{eq:4}-\eqref{eq:5} proved under 
additional assumptions below). 

Besides that, cardiology models are special in the sense that additional conditions necessary 
for recovering of unknown potentials $u_i,\, u_e,\, u_b$ in the problem 
(\ref{eq:4})-(\ref{eq:5}) should preferably be set on the boundary of `the body' $\Omega$, 
since all measurements must be less traumatic for a patient and not invasive.
\end{example}

\section{Application of an ill-posed Cauchy problem}
\label{sec:3}

On of the simplest additional conditions mentioned above leads to using of an ill-posed Cauchy 
problem. More precisely, it implies measuring the potential $u_b$ on the boundary of `the 
body': 
\begin{equation}
u_b = f \text{ on $\partial \Omega_B$},
\label{eq:6}
\end{equation}
where $f$ is a given vector-function from ${\left[H^{3/2}(\Omega)\right]}^k$.

Unfortunately, as known very well, the problem \eqref{eq:5}, \eqref{eq:6} is nothing else but 
an ill-posed problem for an elliptic operator $A_{b}^*A_b$. Let us see what the addition of 
the property \eqref{eq:6} gives in a more general problem than those in cardiology.

Denote by $N(\Omega)$ the set of solutions to the problem 
\eqref{eq:4}, \eqref{eq:5}, \eqref{eq:6} under the condition $f=0$.
Let $S_{A_e}(\Omega_H)$ be the space of generalized solutions of the equation 
$A_{e}h = 0$ в $\Omega_H$. Since the operator $A_e$ has an injective symbol and its 
coefficients are real analytic, the Petrovsky theorem yields that the elements of the space
$S_{A_e}(\Omega_H)$ are real analytic vector-functions in $\Omega_H$.

\begin{theorem} \label{t.uniq}
Let bounded domains $\Omega_H$, $\Omega$, and $\Omega_b$ have twice smooth boundaries and let 
for some constant $\lambda>0$,
\begin{equation} \label{eq.proportional}
A_i = \lambda A_e .
\end{equation} 
Then the set $N(\Omega)$ consists of triples $(u_i,u_e,u_b)\subset 
{\left[H^2(\Omega_H)\right]}^k\times {\left[H^2(\Omega_H)\right]}^k\times {\left[H^2(\Omega)
\right]}^k$ such that
\begin{equation}
u_i=\frac{h-w}{{\lambda}^2},\ \ \  u_e=w,\ \ \  u_b=0,
\label{eq:7n}
\end{equation} 
where   $h$ is an arbitrary function from the space $S_{A_e}(\Omega_H) \cap 
{\left[H^2(\Omega_H)\right]}^k$, and $w$ is an arbitrary function from 
${\left[H^2_0(\Omega_H)\right]}^k$.
\end{theorem}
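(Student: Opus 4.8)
The plan is to deal with the sub-problem in $\Omega$ first: for $f=0$ it will force the body potential $u_b$ to vanish identically, and then I would propagate this vanishing through the transmission conditions on $\partial\Omega_H$ and combine it with the proportionality \eqref{eq.proportional} to read off the description \eqref{eq:7n} of $u_i$ and $u_e$; the opposite inclusion will be a straightforward check.

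First I would prove that $u_b\equiv 0$ in $\Omega$. For $f=0$ the component $u_b\in[H^2(\Omega)]^k$ solves $A_b^*A_bu_b=0$ in $\Omega$ with the homogeneous Cauchy data $u_b=0$ and $\nu_{A_b}u_b=0$ on $\partial\Omega_B$. Since $u_b$ vanishes on $\partial\Omega_B$, all of its tangential derivatives vanish there, so on $\partial\Omega_B$ one has $A_bu_b=\sigma(A_b)(\nu)\,\partial u_b/\partial\nu$ and hence $\nu_{A_b}u_b=\sigma^*(A_b)(\nu)\sigma(A_b)(\nu)\,\partial u_b/\partial\nu$; because the principal symbol of $A_b$ is injective, the Hermitian $(k\times k)$-matrix $\sigma^*(A_b)(\nu)\sigma(A_b)(\nu)$ is positive definite, and therefore $\partial u_b/\partial\nu=0$ on $\partial\Omega_B$ as well. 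Thus $u_b$ carries zero Cauchy data on $\partial\Omega_B$ for the second-order elliptic operator $A_b^*A_b$. Extending $u_b$ by zero across $\partial\Omega_B$ gives a function $\tilde u$ that is still of class $H^2$ and still solves $A_b^*A_b\tilde u=0$ (the natural transmission data across $\partial\Omega_B$, namely $\tilde u$ and $\nu_{A_b}\tilde u$, agree from both sides and both vanish, so no surface terms appear), and $\tilde u$ is identically $0$ on an open set; since $A_b^*A_b$ has constant coefficients it is analytically hypoelliptic, so $\tilde u$ is real analytic on the connected open set where it is defined ($\Omega$, which is connected in the model geometry, together with an exterior collar of $\partial\Omega_B$), and therefore $\tilde u\equiv0$, whence $u_b\equiv0$ in $\Omega$. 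This is exactly the uniqueness of the ill-posed Cauchy problem for elliptic operators with injective principal symbol used in \cite{3}, and I expect it to be the main obstacle: one has to be a little careful about unique continuation for systems, which is why I would route the argument through the constant coefficients and analyticity rather than through a general UCP statement. Everything that follows is algebra plus one integration by parts.

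Once $u_b\equiv0$, the last two relations of \eqref{eq:4} become $u_e=0$ and $\nu_{A_e}u_e=-\nu_{A_b}u_b=0$ on $\partial\Omega_H$. Repeating the computation above with $A_e$ in place of $A_b$, vanishing of the trace of $u_e$ reduces $\nu_{A_e}u_e$ to $\sigma^*(A_e)(\nu)\sigma(A_e)(\nu)\,\partial u_e/\partial\nu$, and injectivity of $\sigma(A_e)$ forces $\partial u_e/\partial\nu=0$ on $\partial\Omega_H$; so both the trace and the first-order normal derivative of $u_e$ vanish there, which by the standard trace characterization of $H^2_0$ on a twice smooth boundary gives $u_e\in[H^2_0(\Omega_H)]^k$, and I set $w:=u_e$. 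Next, by \eqref{eq.proportional} (with $\lambda>0$ real we have $A_i^*A_i=\lambda^2A_e^*A_e$ and $\nu_{A_i}=\lambda^2\nu_{A_e}$, since principal symbols scale linearly), the first equation of \eqref{eq:4} reads $A_e^*A_eh=0$ in $\Omega_H$ with $h:=\lambda^2u_i+u_e\in[H^2(\Omega_H)]^k$, while $\nu_{A_i}u_i=0$ gives $\nu_{A_e}u_i=0$, and together with $\nu_{A_e}u_e=0$ this yields $\nu_{A_e}h=0$ on $\partial\Omega_H$. Then Green's formula \eqref{eq.Green} for $A_e$ on $\Omega_H$ applied with $u=v=h$ gives
\[
0=\int_{\partial\Omega_H}h^*\,\nu_{A_e}h\,ds=\int_{\Omega_H}\bigl(h^*(A_e^*A_eh)-(A_eh)^*(A_eh)\bigr)\,dx=-\int_{\Omega_H}|A_eh|^2\,dx,
\]
so $A_eh=0$ in $\Omega_H$, i.e. $h\in S_{A_e}(\Omega_H)\cap[H^2(\Omega_H)]^k$; solving the pair $h=\lambda^2u_i+u_e$, $w=u_e$ for $u_i,u_e$ produces exactly \eqref{eq:7n}.

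For the reverse inclusion I would simply verify that, for any $h\in S_{A_e}(\Omega_H)\cap[H^2(\Omega_H)]^k$ and $w\in[H^2_0(\Omega_H)]^k$, the triple \eqref{eq:7n} lies in $N(\Omega)$: $u_b\equiv0$ makes \eqref{eq:5} and \eqref{eq:6} with $f=0$ trivial; since $w$ and $\partial w/\partial\nu$ vanish on $\partial\Omega_H$ one gets $A_ew|_{\partial\Omega_H}=0$, hence $u_e=u_b=0$ and $\nu_{A_e}u_e=0=-\nu_{A_b}u_b$ there; and from $A_eh=0$ one gets $\nu_{A_e}h=0$ and $A_e^*A_eh=0$, so $A_i^*A_iu_i+A_e^*A_eu_e=A_e^*A_eh=0$ in $\Omega_H$ and $\nu_{A_i}u_i=\lambda^2\nu_{A_e}u_i=\nu_{A_e}(h-w)=0$ on $\partial\Omega_H$. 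The only genuinely analytic ingredient in the whole argument is the Cauchy-uniqueness of the first step.
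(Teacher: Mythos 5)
Your proof is correct and follows essentially the same route as the paper: force $u_b\equiv 0$ by uniqueness in the ill-posed Cauchy problem, deduce $u_e\in[H^2_0(\Omega_H)]^k$ from the vanishing Dirichlet pair on $\partial\Omega_H$, and identify $h=\lambda^2u_i+u_e$ as an element of $S_{A_e}(\Omega_H)$ via Green's formula, with the converse inclusion checked directly. The only difference is that you prove inline (zero extension plus analytic hypoellipticity of the constant-coefficient operator, and the elementary trace characterization of $H^2_0$ for a twice smooth boundary) the two ingredients the paper imports from Tarkhanov's Theorem 10.3.5 and the Hedberg--Wolff spectral synthesis theorem.
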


\begin{proof}

Let a vector  $h$ belong to $S_{A_e}(\Omega_H) \cap {\left[H^2(\Omega_H)\right]}^k$
and a vector  $w$ belong to $ {\left[H^2_0(\Omega_H)\right]}^k$. Then  
$w$ satisfies the following conditions
\begin{equation}
w=0 \text{ на $\partial\Omega_H$,} \,\, \nu_{A_i}(w) = 0 \text{ on $\partial\Omega_H$,}
\label{eq:8n}
\end{equation} 
and $A_i^* A_i = \lambda^2 A_e^* A_e $. 
Therefore the vector functions from (\ref{eq:7n}) give a solution to the problem  \eqref{eq:4}, 
\eqref{eq:5}, \eqref{eq:6} for $f=0$.

Let $u_i,u_e\in {\left[H^2(\Omega_H)\right]}^k$, and $u_b \in {\left[H^2(\Omega)\right]}^k$ is 
a triple of functions from $N(\Omega)$. Then from (\ref{eq:4})-(\ref{eq:5}) it follows that  
$u_b$ is a solution to the Cauchy problem for the operator   $A_{b}^*A_b $:
\begin{equation*}
A_{b}^*A_b u_b = 0 \text{ in $\Omega$},
\nu_{A_b}(u_b) = 0 \text{ on $\partial\Omega_B$},
u_b = 0 \text{ on $\partial\Omega_B$}.
\end{equation*} 
Since the operators $A_m$ have injective symbols, 
we have
\begin{equation*}
	\mbox{rang} \, (\nu_{A_{m}})(x,\nu (x)) = \sigma^{*}(A_m)(x,\nu (x)) 
	\sigma(A_{m}) (x,\nu(x)) =k
\end{equation*} 
for any $m=e,i,b$ and all $x\in \partial \Omega_H$ or $\partial \Omega$, 
respectively. In particular, the systems of boundary operators 
$\{ I, \nu_{A_{e}}\}$, $\{ I, \nu_{A_{i}}\}$ are first order Dirichlet systems on  
$\partial \Omega_H$, while the system of boundary operators $\{ I, \nu_{A_{b}}\}$  is a first 
order Dirichlet system on $\partial \Omega$ (see, for example, \cite{3}). Then by the 
uniqueness theorem for a Cauchy problem for elliptic operators   (see, for example, 
\cite[Theorem 10.3.5]{3}), $u_b \equiv 0$ in $\Omega$.
Now by the trace theorem for Sobolev spaces and by equations from (\ref{eq:4}) 
we see that  $u_e \equiv 0$ нon $\partial\Omega_H$ and $\nu_{A_e}(u_e) \equiv 0$ on 
$\partial\Omega_H$. However, since the system of boundary operators 
$\{ I, \nu_{A_{e}}\}$  is a first order Dirichlet system on $\partial \Omega_H$, it follows 
from the theorem on spectral synthesis (see 
\cite{HedbWolf1})  that $u_e \in [H^2_0(\Omega_H)]^k$. 

To complete the proof of the theorem we need the following lemma.

\begin{lemma}\label{lemma.h} 
Let $\Omega_H$ be a bounded domain in $\mathbb{R}^n$ with a twice smooth boundary and 
\eqref{eq.proportional}. If the functions $u_e$, $u_i \in {\left[H^2(\Omega_H)\right]}^k$ 
satisfy the equations (\ref{eq:4}) then they are related in $\Omega_H$ by
\begin{equation} \label{eq.h}
u_e + \lambda^2 u_i = h,
\end{equation} 
where $h$ as a function from the space $S_{A_{e}^{*}A_e}(\Omega_H) \cap 
{\left[H^2(\Omega_H)\right]}^k$.

Moreover, if $u_b \equiv 0$ on $\partial \Omega_H$, then the functions $u_e$, $u_i$ are 
related in $\Omega_H$ by \ref{eq.h}, where $h$ is a function from the space $S_{A_e}(\Omega_H)
 \cap {\left[H^2(\Omega_H)\right]}^k$.
\end{lemma}

\begin{proof}
\renewcommand\qedsymbol{$\square$}
Since $ A_i = \lambda A_e $, the first equation in (\ref{eq:4}) can be rewritten in the form 
\begin{equation}
{A_e}^*A_e h = 0 \text{ in $\Omega_H$},
\label{eq:9}
\end{equation} 
with $h = u_e + \lambda^2 u_i$, and clearly $h\in S_{A_{e}^{*}A_e}(\Omega_H) \cap 
{\left[H^2(\Omega_H)\right]}^k$.

If we additionally know that 
$u_b \equiv 0$ on $\partial \Omega_H$ then, as noticed above, 
$u_b \equiv 0$ in $ \Omega$. Therefore
$\nu_{A_e}(u_e)=0$ on $\partial\Omega_H$, and $\nu_{A_i}(u_i)=0$ and $\partial\Omega_H$, 
which implies that
\begin{equation}
\nu_{A_e}(h) = 0 \text{ on $\partial\Omega_H$}.
\label{eq:10}
\end{equation}
From this, by the Green formula \eqref{eq.Green} we obtain 
\begin{equation*}
\begin{gathered}
0 = (A_{e}^{*} A_e h, h)_{[L^2(\Omega_H)]^k} = \int \limits_{\Omega_H} h^* 
(A_{e}^{*} A_e h) dx = \\
= \int \limits_{\Omega_H} (A_e h)^* (A_e h) dx + \int \limits_{\partial\Omega_H} h^* 
\nu_{A_e}(h) ds = \left\| A_e h \right\|_{[L^2(\Omega_H)]^l}^2.
\end{gathered}
\end{equation*}

Therefore, the vector function  
$h$ defined by the equality \eqref{eq.h} belongs to 
$S_{A_e}(\Omega_H) \cap {\left[H^2(\Omega_H)\right]}^k$.
\end{proof}

Thus, the functions  $u_i,u_e\in {\left[H^2(\Omega_H)\right]}^k$ satisfy 
\eqref{eq:4}, and by Lemma \ref{lemma.h} we get $u_i=\frac{h-v}{{\lambda}^2}$, where $v\in 
[H^2_0(\Omega_H)]^k$  and $h\in S_{A_e}(\Omega_H)\cap {\left[H^2(\Omega_H)\right]}^k$.
\end{proof}

In particular, it follows from Lemma \ref{lemma.h} that the zero space of the problem  
\eqref{eq:4} coincides with the space 
$S_{A_{e}}(\Omega_H)\cap {\left[H^2(\Omega_H)\right]}^k$. 

Denote by $\ker A_e$ the kernel of a continuous linear operator 
$A_e: \, [H^2(\Omega_H)]^k\rightarrow [H^1(\Omega_H)]^l$ and consider several examples. 
In fact, $\ker A_e =S_{A_{e}}(\Omega_H)\cap {\left[H^2(\Omega_H)\right]}^k$. 

\begin{example}\label{ex.3.1}
 
Let $A_e = \begin{pmatrix} \nabla \\ 1 \end{pmatrix}$, ($k=1$, $l=n+1$).  
Then $A_{e}^{*} = \begin{pmatrix} -\text{div},  1 \end{pmatrix}$, 
$\nu_{A_{e}} = \frac{\partial}{\partial\nu}$, 
$A_{e}^{*} A_e = -\Delta + 1$, 
and the problem (\ref{eq:9})-(\ref{eq:10}) becomes a Neumann problem for the Helmholtz 
operator 
\begin{equation}
\label{eq:exam}
\left\{ 
\begin{aligned}
	-\Delta h + h = 0 \text{ in $\Omega_H$}, \\
	\frac{\partial h}{\partial \nu} = 0 \text{ on $\partial\Omega_H$},
\end{aligned}
\right.
\end{equation}
and the equation $A_e h = 0$ takes the form
\begin{equation*}
\left\{ 
\begin{aligned}
	\nabla h = 0 \text{ in $\Omega_H$}, \\
	h = 0 \text{ in $\Omega_H$}.
\end{aligned}
\right .
\end{equation*}
Consequently, $\text{ker} A_e =\{ 0 \}$ and coincides with the space of solutions of the 
homogeneous problem (\ref{eq:exam}).
\end{example}

\begin{example}\label{ex.3.2}

Let $A_e = \nabla$ then $A_e^* = - \text{div}$. 
In this case ($k=1$, $l=n$), 
$A_{e}^{*} = -\text{div}$, 
$\nu_{A_{e}} = \frac{\partial}{\partial\nu}$,   
$A_{e}^{*} A_e = - \Delta$, and the problem (\ref{eq:9})-(\ref{eq:10}) becomes a Neumann 
problem for the Laplace operator
\begin{equation}
\label{eq:exam1}
\left\{ 
\begin{aligned}	
	\Delta h = 0 \text{ in $\Omega_H$}, \\
	\frac{\partial h}{\partial \nu} = 0 \text{ on $\partial\Omega_H$},
	\end{aligned}
\right.
\end{equation}
and the equation $A_e h = 0$ takes the form
\begin{equation*}
	\nabla h = 0 \text{ in $\Omega_H$}.
\end{equation*}
Therefore, $\text{ker} A_e = \mathbb{R}$ and coincides with the space of solutions of the 
problem (\ref{eq:exam1}).

\end{example}

\begin{example}\label{ex.3.3}

Consider the case where $A_e = \overline{\partial} = \partial_x - i\partial_y$ is the Cauchy-
Riemann operator 
in ${\mathbb R^2}\cong \mathbb C$ where $i$ stands for imaginary unit.
Then $A_e^* = - \partial = -\partial_x - i\partial_y$, and the kernel of $A_e$ is holomorphic 
functions. The problem (\ref{eq:9})-(\ref{eq:10}) defines then the zero space of a non-
coercive $\overline \partial$-Neumann problem, see, for example,
\cite{FK}, \cite{ShlTark12}.

\end{example}

It is clear that the operator $A_e$ should be chosen in a way that its kernel is at least finite dimensional.

Under assumptions of Theorem \ref{t.uniq} the rest of the scheme of solving the problem 
\eqref{eq:4}, \eqref{eq:5}, \eqref{eq:6} differs little from the standard one, see \cite{2K}.
Namely, first we introduce a function  $h(x)$ such that $h(x)={\lambda}^2 u_i + u_e$, 
where $x\in \Omega_H$. 
From the conditions on the boundaries in (\ref{eq:4}) and the fact that $\nu_{A_i}=\lambda 
\nu_{A_e}$ we get that
\begin{equation*}
\nu_{A_e} h=-\nu_{A_b} u_b \text{ on $\partial\Omega_H$}.
\end{equation*}

Thus, we can rewrite the original problem \eqref{eq:4}, \eqref{eq:5}, \eqref{eq:6} in new 
notation: knowing a vector $f \in [H^{3/2}(\partial\Omega_H)]^k$, find vectors 
$h \in [H^{2} (\partial \Omega_H)]^k$ and 
$u_b \in [H^{2} (\partial \Omega)]^k$ such that
\begin{equation}
\left\{
\begin{gathered}
	A_{e}^{*}A_e h = 0 \text{ in $\Omega_H$},\\
	\nu_{A_e} h=-\nu_{A_b} u_b \text{ on $\partial\Omega_H$},\\
	\end{gathered}
\right.
\label{eq:NeumannProblem}
\end{equation}

\begin{equation}
\left\{
\begin{gathered}
A_{b}^{*}A_b u_b = 0 \text{ in $\Omega$},\\
\nu_{A_b} u_b = 0 \text{ on $\partial\Omega_B$},\\
u_b= f  \text{ on $\partial\Omega_B$}.
\end{gathered}
\right.
\label{eq:CauchyProblem}
\end{equation}

The original problem splits into two --  \eqref{eq:NeumannProblem} and 
\eqref{eq:CauchyProblem}. 
The problem \eqref{eq:CauchyProblem}, as noticed above, is an ill-posed Cauchy problem for an 
elliptic operator $A_b^*A_b$. It is known that if a solution to this problem exists it is 
unique.  
The problem \eqref{eq:NeumannProblem} is a Neumann problem for an elliptic operator 
$A_e^*A_e$. Unfortunately, in general the Neumann problem may also be ill-posed. For it to be 
Fredholm, the so called Shapiro-Lopatinsky conditions must be placed \cite[Chapter 1, \S 3, 
condition II for $q=0$]{Agr}, \cite{Shap} on the pair 
$(A_e^*A_e, \nu_{A_e} )$. 
In particular, they guarantee that the space $S_{A_e}(\Omega_H)\cap [H^2 (\Omega_H)]^k$ is 
finite dimensional.

More precisely, let us consider the following Neumann problem: for a given vector  
$h_0 \in [H^{1/2} (\partial \Omega_H)]^k$ find 
a vector $h \in [H^{2} (\partial \Omega_H)]^k$ such that 
\begin{equation}
\left\{
\begin{gathered}
	A_{e}^{*}A_e h = 0 \text{ in $\Omega_H$},\\
	\nu_{A_e} h= h_0 \text{ on $\partial\Omega_H$},\\
	\end{gathered}
\right.
\label{eq:NeumannProblem1}
\end{equation} 
and formulate conditions for solvability.

\begin{theorem} \label{t.Neumann}
If for a pair of operators $(A_e^*A_e, \nu_{A_e} )$ the Shapiro-Lopatinsky conditions are 
fulfilled then the problem \eqref{eq:NeumannProblem1} is Fredholm. To be precise, 
\begin{enumerate}
\item[1)] the zero space of the problem coincides with the finite-dimensional space $S_{A_e}(
\Omega_H)\cap [H^2 (\Omega_H)]^k$;
\item[2)] 
 the problem is solvable if and only if  
\begin{equation}\label{condition}
(h_0,\varphi)_{[L^2 (\partial\Omega_H)]^k}=0 
\mbox{ for all } \varphi \in S_{A_e}(\Omega_H)\cap [H^2 (\Omega_H)]^k;
\end{equation} 
\item[3)]
under \eqref{condition} there exists a unique solution $h_1$ 
of the problem \eqref{eq:NeumannProblem1} satisfying
\begin{equation}\label{eq.h1}
(h_1,\varphi)_{[L^2 (\partial\Omega_H)]^k}=0 
\mbox{ for all } \varphi \in S_{A_e}(\Omega_H)\cap [H^2 (\Omega_H)]^k.
\end{equation} 
\end{enumerate}
\end{theorem}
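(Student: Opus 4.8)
The plan is to obtain all three assertions from the general Fredholm theory for elliptic boundary value problems satisfying the Shapiro--Lopatinsky (complementing) condition, combined with the first and second Green formulas for the generalized Laplacian $A_e^*A_e$ and with the uniqueness theorem for the Cauchy problem for elliptic operators. First I would fix the functional setting: since $\nu_{A_e}=\sigma^*(A_e)(\nu)A_e$ is a first order operator, the map
\[
\mathcal{L}\colon [H^2(\Omega_H)]^k \longrightarrow [L^2(\Omega_H)]^k \oplus [H^{1/2}(\partial\Omega_H)]^k,\qquad \mathcal{L} h=(A_e^*A_e h,\ \nu_{A_e}h),
\]
is bounded, and $h_0$ ranges precisely over the target boundary space. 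The pair $(A_e^*A_e,\nu_{A_e})$ is elliptic, its symbol being strongly elliptic, and by hypothesis it satisfies the Shapiro--Lopatinsky conditions; hence, by the theory of \cite{Agr}, \cite{Shap}, \cite{Simanca1987}, the operator $\mathcal{L}$ is Fredholm, with finite dimensional kernel and cokernel and closed range, and, the coefficients being constant, elements of $\ker\mathcal{L}$ are smooth up to $\partial\Omega_H$.

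Next I would identify $\ker\mathcal{L}$, reproducing the computation of Lemma \ref{lemma.h}: if $\mathcal{L}h=0$, then Green's formula \eqref{eq.Green} with $u=v=h$ gives
\[
0=(A_e^*A_e h,h)_{[L^2(\Omega_H)]^k}=\|A_e h\|_{[L^2(\Omega_H)]^l}^2+\int_{\partial\Omega_H}h^*\nu_{A_e}h\,ds=\|A_e h\|_{[L^2(\Omega_H)]^l}^2,
\]
so $A_e h=0$; conversely $A_e h=0$ forces $A_e^*A_e h=0$ and $\nu_{A_e}h=\sigma^*(A_e)(\nu)A_e h=0$. Thus $\ker\mathcal{L}=S_{A_e}(\Omega_H)\cap[H^2(\Omega_H)]^k=:\mathcal{N}$, which is finite dimensional by Fredholmness, proving assertion (1). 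Incidentally, the same argument applied to a solution of the formal adjoint boundary value problem shows that its zero space is again $\mathcal{N}$.

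For the solvability conditions I would use the second Green formula, obtained by subtracting \eqref{eq.Green} written for the pair $(u,v)$ from the complex conjugate of \eqref{eq.Green} written for $(v,u)$:
\[
(A_e^*A_e u,v)_{[L^2(\Omega_H)]^k}-(u,A_e^*A_e v)_{[L^2(\Omega_H)]^k}=\int_{\partial\Omega_H}\bigl(v^*\nu_{A_e}u-\overline{u^*\nu_{A_e}v}\bigr)\,ds .
\]
This identity exhibits the Neumann problem \eqref{eq:NeumannProblem1} as its own formal adjoint with respect to the first order Dirichlet system $\{I,\nu_{A_e}\}$ on $\partial\Omega_H$, so that the cokernel of $\mathcal{L}$ is (anti-)isomorphic to $\mathcal{N}$. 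Feeding a would-be solution $h$ and an arbitrary $\varphi\in\mathcal{N}$ (for which $A_e^*A_e\varphi=0$ and $\nu_{A_e}\varphi=0$) into this formula collapses the conditions cutting out $\mathrm{ran}\,\mathcal{L}$ at the point $(0,h_0)$ to $(h_0,\varphi)_{[L^2(\partial\Omega_H)]^k}=0$ for all $\varphi\in\mathcal{N}$; this is assertion (2), whose necessity also follows directly from the first Green formula with $u=h$, $v=\varphi$.

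Finally, for assertion (3): once \eqref{condition} holds a solution exists, unique up to addition of an element of $\mathcal{N}$. The trace map $\varphi\mapsto\varphi|_{\partial\Omega_H}$ is injective on $\mathcal{N}$, because $\varphi\in\mathcal{N}$ with $\varphi|_{\partial\Omega_H}=0$ has, since $A_e\varphi=0$ also yields $\nu_{A_e}\varphi=0$, vanishing Cauchy data for the elliptic operator $A_e^*A_e$ relative to the Dirichlet system $\{I,\nu_{A_e}\}$, whence $\varphi\equiv 0$ by \cite[Theorem 10.3.5]{3}. Therefore, among all solutions $h+\varphi$ with $\varphi\in\mathcal{N}$, subtracting the unique $\varphi$ whose trace is the $[L^2(\partial\Omega_H)]^k$-orthogonal projection of $h|_{\partial\Omega_H}$ onto the finite dimensional subspace $\{\psi|_{\partial\Omega_H}:\psi\in\mathcal{N}\}$ singles out the unique $h_1$ satisfying \eqref{eq.h1}. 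The step I expect to require the most care is the identification of the solvability conditions in (2): one must verify, via the second Green formula above, that the ADN/Shapiro--Lopatinsky machinery produces conditions dual precisely to $\mathcal{N}$, i.e. that the Neumann boundary value problem for $A_e^*A_e$ is its own formal adjoint; the remaining ingredients — Fredholmness, the Green-formula computation of $\ker\mathcal{L}$, and the Cauchy uniqueness used in the normalization — are already available in the excerpt.
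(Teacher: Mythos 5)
The paper does not actually prove Theorem \ref{t.Neumann}: its entire proof is the citation ``See \cite{Simanca1987}.'' Your proposal therefore supplies an argument where the paper supplies none, and what you supply is essentially a correct reconstruction of the standard route through that reference. Your identification of the kernel via the first Green formula \eqref{eq.Green} reproduces exactly the computation the paper itself performs inside Lemma \ref{lemma.h} (there applied to $h$ with $\nu_{A_e}h=0$), and your normalization argument for part (3) — injectivity of the trace on $S_{A_e}(\Omega_H)\cap[H^2(\Omega_H)]^k$ via the Cauchy uniqueness theorem \cite[Theorem 10.3.5]{3}, followed by an orthogonal projection in the finite-dimensional trace space — is sound and is the natural way to get existence and uniqueness of $h_1$ satisfying \eqref{eq.h1}. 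The one step you correctly flag as delicate, and which remains the only real gap between your sketch and a complete proof, is part (2): necessity of \eqref{condition} is immediate from the first Green formula (take $v=\varphi$ with $A_e\varphi=0$), but sufficiency requires identifying the cokernel of the Fredholm operator $\mathcal{L}$ with the kernel of the adjoint boundary value problem and verifying, via the second Green formula, that this adjoint problem is again the Neumann problem for $A_e^*A_e$ — i.e.\ that the duality pairing on the boundary data reduces, for right-hand side $(0,h_0)$, precisely to the $[L^2(\partial\Omega_H)]^k$ pairing of $h_0$ against $\varphi$. That is exactly the content of the mixed elliptic boundary value problem theory in \cite{Simanca1987} (and \cite{Agr}, \cite{Shap}) that the paper leans on wholesale; your outline invokes it at the right place and in the right form, so the proposal is acceptable as a proof sketch at the same level of rigor the paper itself adopts.
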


\begin{proof}
See \cite{Simanca1987}. 
\end{proof}

Thus, under hypothesis of Theorem \ref{t.Neumann} for solvability of the Neumann problem 
\eqref{eq:NeumannProblem} it is necessary and sufficient that for the vector 
$h_0=-\nu_{A_b} u_b$ the condition \eqref{condition} is fulfilled.
This can be achieved if we place additional conditions on relations between the operators 
$A_e$ and $A_b$. 
Namely, as we have seen above, it is quite natural to assume that
\begin{equation} \label{eq.prop.be}
A_e=\tilde{\lambda}A_b 
\mbox{ for some constant } \tilde \lambda>0.
\end{equation}

Denote by $S_{A_b}(\Omega)$ the zero space of solutions to the problem 
(\ref{eq:CauchyProblem}) in the domain $\Omega$.

\begin{corollary} \label{c.Neumann}
Let for the pair of operators $(A_e^*A_e, \nu_{A_e} )$ the Shapiro-Lopatinsky conditions be 
fulfilled. Besides that assume that the identity \eqref{eq.prop.be} holds and the spaces  
$S_{A_e}(\Omega_H)\cap [H^2 (\Omega_H)]^k$ and $S_{A_b}(\Omega)\cap [H^2 (\Omega)]^k$ 
coincide. Then for any vector $u_b \in [H^2 (\Omega)]^k$ satisfying \eqref{eq:CauchyProblem} 
there exists a unique vector  $h_1 \in [H^2 (\Omega_H)]^k$ that satisfies  
\eqref{eq:NeumannProblem} and \eqref{eq.h1}. 
\end{corollary}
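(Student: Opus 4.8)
The plan is to reduce Corollary \ref{c.Neumann} directly to Theorem \ref{t.Neumann} by verifying that, under the stated hypotheses, the datum $h_0 := -\nu_{A_b} u_b$ satisfies the orthogonality condition \eqref{condition}. First I would fix a vector $u_b \in [H^2(\Omega)]^k$ satisfying \eqref{eq:CauchyProblem}; in particular $A_b^* A_b u_b = 0$ in $\Omega$, so $u_b \in S_{A_b}(\Omega)\cap [H^2(\Omega)]^k$. By hypothesis this space equals $S_{A_e}(\Omega_H)\cap [H^2(\Omega_H)]^k$, which makes sense only after restricting to $\Omega_H$ (recall $\Omega_H$ is the cavity cut out of the body, so $\partial \Omega_H \subset \partial \Omega$); so I would read the coincidence of the two spaces as: the restriction to $\Omega_H$ of any element of $S_{A_b}(\Omega)\cap[H^2(\Omega)]^k$ lies in $S_{A_e}(\Omega_H)\cap[H^2(\Omega_H)]^k$, and conversely. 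Then I would take an arbitrary $\varphi \in S_{A_e}(\Omega_H)\cap[H^2(\Omega_H)]^k$ and compute the pairing $(h_0,\varphi)_{[L^2(\partial\Omega_H)]^k} = -(\nu_{A_b} u_b, \varphi)_{[L^2(\partial\Omega_H)]^k}$.

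The key computational step is to apply Green's formula \eqref{eq.Green} for the operator $A_b$ on the domain $\Omega$, or rather on $\Omega_H$ for the operator $A_e$, keeping track of the boundary pieces. Since $A_e = \tilde\lambda A_b$ by \eqref{eq.prop.be}, we have $A_e^* A_e = \tilde\lambda^2 A_b^* A_b$ and $\nu_{A_e} = \tilde\lambda^2 \nu_{A_b}$ on $\partial\Omega_H$, so up to the positive constant $\tilde\lambda^2$ the conormal derivatives and generalized Laplacians of the two operators agree; in particular $\nu_{A_b} u_b$ and $A_b^* A_b u_b$ translate into $\nu_{A_e}$-data and $A_e^* A_e$-data. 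Now apply \eqref{eq.Green} on $\Omega_H$ with $u := u_b|_{\Omega_H}$ and $v := \varphi$:
\begin{equation*}
\int_{\partial\Omega_H} \varphi^* \nu_{A_e}(u_b)\, ds = \int_{\Omega_H}\big( \varphi^* (A_e^* A_e u_b) - (A_e \varphi)^* (A_e u_b)\big)\, dx.
\end{equation*}
On the right-hand side, $A_e^* A_e u_b = \tilde\lambda^2 A_b^* A_b u_b = 0$ in $\Omega_H$ because $u_b$ solves \eqref{eq:CauchyProblem} on all of $\Omega$; and $A_e \varphi = 0$ in $\Omega_H$ since $\varphi \in S_{A_e}(\Omega_H)$. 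Hence both terms vanish and $\int_{\partial\Omega_H}\varphi^* \nu_{A_e}(u_b)\, ds = 0$. Since $\nu_{A_e}(u_b) = \tilde\lambda^2 \nu_{A_b}(u_b) = -\tilde\lambda^2 h_0$ up to sign conventions, this is exactly the statement that $(h_0,\varphi)_{[L^2(\partial\Omega_H)]^k} = 0$ for every $\varphi \in S_{A_e}(\Omega_H)\cap[H^2(\Omega_H)]^k$, i.e. condition \eqref{condition} holds. The role of the hypothesis that the two solution spaces coincide is precisely to guarantee that the space over which one tests in \eqref{condition} is the space in which the relevant Green's-formula argument closes; without it one could only conclude orthogonality to the smaller space $S_{A_b}(\Omega)\cap[H^2(\Omega)]^k$ restricted to $\Omega_H$.

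Once \eqref{condition} is verified, Theorem \ref{t.Neumann}, parts 2) and 3), applies to the Neumann problem \eqref{eq:NeumannProblem1} with this $h_0$: there exists a solution $h$, and among all solutions there is a unique one $h_1 \in [H^2(\Omega_H)]^k$ satisfying the normalization \eqref{eq.h1}. By construction $h_1$ solves \eqref{eq:NeumannProblem} (which is \eqref{eq:NeumannProblem1} with $h_0 = -\nu_{A_b} u_b$), which is what the corollary asserts. I expect the main obstacle to be bookkeeping rather than mathematical depth: one must be careful with (i) the precise meaning of ``$S_{A_e}(\Omega_H)\cap[H^2(\Omega_H)]^k$ and $S_{A_b}(\Omega)\cap[H^2(\Omega)]^k$ coincide'' given that the two domains differ (this presumably means after the natural restriction/extension, using that $A_e = \tilde\lambda A_b$ forces the defining equations to be the same up to a constant), (ii) the sign and constant conventions in the definition $\nu_{A_m} = \sigma^*(A_m)(\nu) A_m$ versus Green's formula \eqref{eq.Green}, and (iii) ensuring the trace regularity is enough to make all boundary integrals and pairings well-defined, which is covered by the standing assumption $u_b \in [H^2(\Omega)]^k$ and twice-smooth boundaries. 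None of these is a genuine difficulty, so the proof is essentially a one-line reduction once the hypotheses are unpacked correctly.
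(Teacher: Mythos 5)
Your overall strategy --- reduce the corollary to Theorem \ref{t.Neumann} by verifying the orthogonality condition \eqref{condition} for $h_0=-\nu_{A_b}u_b$ --- is exactly the paper's, but the central computation is carried out on the wrong domain and does not go through as written. You apply Green's formula \eqref{eq.Green} on $\Omega_H$ with $u:=u_b|_{\Omega_H}$. The function $u_b$ is an element of $[H^2(\Omega)]^k$ with $\Omega=\Omega_B\setminus\Omega_H$; the domains $\Omega$ and $\Omega_H$ are \emph{disjoint} (they share only the interface $\partial\Omega_H$), so $u_b|_{\Omega_H}$, $A_e u_b$ on $\Omega_H$, and the integrals $\int_{\Omega_H}\varphi^*(A_e^*A_e u_b)\,dx$, $\int_{\Omega_H}(A_e\varphi)^*(A_e u_b)\,dx$ are all undefined. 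For the same reason your reading of the hypothesis that $S_{A_e}(\Omega_H)\cap[H^2(\Omega_H)]^k$ and $S_{A_b}(\Omega)\cap[H^2(\Omega)]^k$ coincide as ``restriction to $\Omega_H$ of elements of $S_{A_b}(\Omega)$'' cannot be right; the intended reading (consistent with the paper's examples) is extension of the finite-dimensional solution space across the interface, e.g.\ constants or linear functions defined on all of $\mathbb{R}^n$.

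The correct version of the computation runs on $\Omega$, not on $\Omega_H$: given $\varphi\in S_{A_e}(\Omega_H)\cap[H^2(\Omega_H)]^k$, use the coincidence hypothesis to replace it by the corresponding $\psi\in S_{A_b}(\Omega)\cap[H^2(\Omega)]^k$, then apply \eqref{eq.Green} for $A_b$ on $\Omega$ with $u=u_b$ and $v=\psi$. The volume terms vanish because $A_b^*A_b u_b=0$ in $\Omega$ and $A_b\psi=0$ in $\Omega$, so $\int_{\partial\Omega}\psi^*\nu_{A_b}u_b\,ds=0$. Since $\partial\Omega=\partial\Omega_B\cup\partial\Omega_H$, the boundary condition $\nu_{A_b}u_b=0$ on $\partial\Omega_B$ from \eqref{eq:CauchyProblem} kills the $\partial\Omega_B$ piece and leaves $\int_{\partial\Omega_H}\psi^*\nu_{A_b}u_b\,ds=0$ (up to the sign coming from the orientation of the normal on the interface). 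Your proposal never invokes the condition $\nu_{A_b}u_b=0$ on $\partial\Omega_B$ at all, which is a symptom of the misplaced domain: that condition is indispensable for discarding the outer boundary contribution. Once the identity is established on $\Omega$, the proportionality \eqref{eq.prop.be} converts $\nu_{A_b}$ into $\nu_{A_e}$ up to the constant $\tilde\lambda^2$, giving \eqref{condition}, and parts 2) and 3) of Theorem \ref{t.Neumann} finish the argument as you say.
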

\begin{proof} By Theorem \ref{t.Neumann} for solvability of the problem 
\eqref{eq:NeumannProblem} it is necessary and sufficient  that
\begin{equation}
(\nu_{A_e} u_b,\varphi)_{[L^2 (\partial\Omega_H)]^k}=0 
\mbox{ for all } \varphi \in S_{A_e}(\Omega_H)\cap [H^2 (\Omega_H)]^k.
\label{cor1}
\end{equation}
If the vector $u_b \in [H^2 (\Omega)]^k$ satisfies \eqref{eq:CauchyProblem}, 
then by the Green formula 
 \eqref{eq.Green} for the operator $A_b$ 
\begin{equation*}
- \int\limits_{\partial\Omega_H} \nu_{A_b} u_b \psi ds 
=
\int\limits_{\partial\Omega}\nu_{A_b} u_b \psi ds = (\psi ,A^*_b A_b u)_{[L^2 (\Omega)]^k} 
- (A_b \psi , A_b u)_{[L^2 (\Omega)]^k} =
 0,
\end{equation*}
for any $\psi \in S_{A_b}(\Omega_H)\cap [H^2 (\Omega_H)]^k$.

On the other hand, the relation \eqref{eq.prop.be} 
guarantees that $(\tilde \lambda)^2\nu_{A_b} = -\nu_{A_e} $,
and therefore 
\begin{equation*}
(\nu_{A_e} u_b,\varphi)_{[L^2 (\partial\Omega_H)]^k} = -
(\tilde \lambda)^2(\nu_{A_b} u_b,\varphi)_{[L^2 (\partial\Omega_H)]^k} = -
(\tilde \lambda)^2 \int\limits_{\partial\Omega_H} \nu_{A_b} u_b \psi ds
\end{equation*}
for any $\varphi \in S_{A_e}(\Omega_H)\cap [H^2 (\Omega_H)]^k$. 
Due to the fact that the spaces $S_{A_e}(\Omega_H)\cap [H^2 (\Omega_H)]^k$ and $S_{A_b}(\Omega)
\cap [H^2 (\Omega)]^k$ coincide, \eqref{cor1} holds. Then by statement 3 of Theorem 
\ref{t.Neumann} for any vector $u_b$  there exists a unique vector $h_1 \in [H^2 (\Omega_H)]^k$
 satisfying \eqref{eq:NeumannProblem} and \eqref{eq.h1}.
\end{proof}

The condition that the spaces $S_{A_e}(\Omega_H)\cap [H^2 (\Omega_H)]^k$ and 
$S_{A_b}(\Omega)\cap [H^2 (\Omega)]^k$ coincide seems to be rather strong, especially since 
these are spaces of solutions to different differential equations in different domains. 
Nevertheless, provided \eqref{eq.prop.be} holds,
such a coincidence is possible if  the operator $A_e$ is so much overdetermined that the space 
of its solutions in any domain is finite dimensional and coincides with the space of solutions 
in ${\mathbb R}^n$; the typical examples are the so-called 
stationary holonomic systems.
Let us illustrate this by the following examples.

\begin{example}
Let $A_e=\nabla$ and $A_b=\tilde{\lambda}\nabla$ ($k=1$, $l=n$). The function $u=const$ is a 
solution to the equation $\nabla u=0$ in $\Omega_H$ and extends to $\Omega$, where it is a 
solution to $\tilde{\lambda}\nabla{u}=0$. Thus we get that the spaces $S_{A_e}(\Omega_H)$ and 
$S_{A_b}(\Omega)$ coincide.
\end{example}

\begin{example}
Let $A_e = \begin{pmatrix} \nabla \\ 1 \end{pmatrix}$ and $A_b= \tilde{\lambda}\begin{pmatrix}
 \nabla \\ 1 \end{pmatrix}$, ($k=1$, $l=n+1$).  
A solution to $A_e u = 0$ in $\Omega_H$ is $u\equiv{0}$ and it extends to $\Omega$, where it 
is a solution  $A_b u=0$. Thus, the spaces $S_{A_e}(\Omega_H)$ and
$S_{A_b}(\Omega)$ coincide.
\end{example}

\begin{example}
Consider the following operators $A_i$, $A_e$ и $A_b$:
$$
A_e = 
\begin{pmatrix}
\partial_x & 0 & 0\\
\partial_y & 0 & 0\\
0 & \partial_x & 0\\
0 & \partial_y & 0\\
-1 & 0 & \partial_x\\
0 & -1 & \partial_y\\
\end{pmatrix}, \,\
A_i = \lambda
\begin{pmatrix}
\partial_x & 0 & 0\\
\partial_y & 0 & 0\\
0 & \partial_x & 0\\
0 & \partial_y & 0\\
-1 & 0 & \partial_x\\
0 & -1 & \partial_y\\
\end{pmatrix}, \,\
A_b = \tilde{\lambda}
\begin{pmatrix}
\partial_x & 0 & 0\\
\partial_y & 0 & 0\\
0 & \partial_x & 0\\
0 & \partial_y & 0\\
-1 & 0 & \partial_x\\
0 & -1 & \partial_y\\
\end{pmatrix}.
$$
These operators have injective principal symbols and are equivalent to second order operators
$$
\tilde{ A_e } = \begin{pmatrix} \partial_{xx} \\ \partial_{yy} \\ \partial_{xy} \end{pmatrix}, 
\,\,
\tilde{ A_i } = \lambda \begin{pmatrix} \partial_{xx} \\ \partial_{yy} \\ \partial_{xy} 
\end{pmatrix}, 
\,\,
\tilde{A_b}= \tilde{\lambda} \begin{pmatrix} \partial_{xx} \\ \partial_{yy} \\ 
\partial_{xy},  \end{pmatrix}.
$$ 
Therefore the space of solutions of the system $A_e u = 0$ in 
$\Omega_H$ coincides with the set of all linear functions $u=c_1x+c_2y+c_3$,  
and any function of this form extends to $\Omega$, where it is a solution to the equation
$A_b u=0$. Therefore, the spaces $S_{A_e}(\Omega_H)$ and 
$S_{A_b}(\Omega)$ coincide.
\end{example}

As noticed above, if a solution to the Neumann problem \eqref{eq:NeumannProblem} exists, it 
is `unique' up to an element of the space $S_{A_e}(\Omega_H)\cap [H^2 (\Omega_H)]^k$ (
additively). 

Recall that the aim of solving the original problem  \eqref{eq:4}, \eqref{eq:5}, \eqref{eq:6} 
is to find the transmembrane potential $v$ on the surface $\partial\Omega_H$. Let us write 
down the algorithm for solving  the problem \eqref{eq:NeumannProblem}, 
\eqref{eq:CauchyProblem}:

\begin{enumerate}
	\item find a function $u_b $ and its conormal derivative $\nu_{A_b} (u_b) $ on the surface  
	$\partial\Omega_H$ by solving an ill-posed Cauchy problem \eqref{eq:CauchyProblem} 	for an 
	elliptic operator $A_b^*A_b$.	
	\item compute values of $h(x)$ on the surface $\partial\Omega_H$ by solving a Neumann 
	problem \eqref{eq:NeumannProblem} for an elliptic operator $A_e^*A_e$ 
with the data $\nu_{A_b} u_b$ on $\partial\Omega_H$ obtained in Step 1. The possibility of 
this depends on whether the restrictions on operators $A_e$ and $A_b$ described above hold. 
	\item find the transmembrane potential $v$ on the surface $\partial\Omega_H$ by using the 
	relation \eqref{eq.h} together with $u_b$ and $h$ on $\partial\Omega_H$, found in Steps 1 
	and 2, respectively
\begin{equation}
v=u_i-u_e=\frac{h-u_b}{{\lambda}^2} - u_b  \text{ on $\partial\Omega_H$}.
\label{eq:13}
\end{equation}
\end{enumerate}

In conclusion we note that solvability conditions for an  ill-posed Cauchy problem in Sobolev 
spaces for a rather wide class of operators with real analytic coefficients are well known, 
see, for example, \cite{3}. Moreover, in \cite{3}, \cite{4} on can find constructive 
procedures for its regularization, i.e. for construction exact and approximate solutions (the 
so called Carleman formulas). 
Regarding areas with models with the geometry corresponding to that in cardiology and 
operators that are first order matrix factorizations of the Laplace operator, or more 
generally, of a Lam\'e-type operator such Carleman formulas were obtained in \cite{Sh97}.

\medskip

\emph{ The work was supported by the Foundation for the
Advancement of Theoretical Physics and Mathematics “BASIS”.  }


\bigskip

\end{document}